\newtheorem{theorem}{Theorem}[section]
\newtheorem{proposition}[theorem]{Proposition}
\newtheorem{lemma}[theorem]{Lemma}
\newtheorem{remark}[theorem]{Remark}
\makeatletter \@addtoreset{equation}{section} \makeatother
\def\tilde{\widetilde}
\newcommand{\beq}{\begin{equation}}
\newcommand{\eeq}{\end{equation}}
\newcommand{\Rmnum}[1]{\expandafter\@slowromancap\romannumeral #1@}
\begin{document}

\title[2D  MHD System with only Magnetic Diffusion on Periodic Domain]
{Global Classical Solutions of 2D MHD System with only Magnetic Diffusion on Periodic Domain}

\author[Y. Zhou]{Yi Zhou}\address{School of Mathematical Sciences, Fudan University, Shanghai 200433, P.R.China}
\email{\tt yizhou@fudan.edu.cn}
\author[Y. Zhu]{Yi Zhu}\address{School of Mathematical Sciences, Fudan University, Shanghai 200433, P.R.China}
\email{\tt y\_zhu12@fudan.edu.cn}

\date{}
\let\thefootnote\relax\footnotetext{{\it{Submitted}}: Dec. 20, 2016.}
\subjclass[2010]{35Q35, 76D03, 76W05}
\keywords{ MHD System, Inviscid Flow, Global Classical Solutions.}

\begin{abstract}
This paper studies the global existence of classical solutions to the two-dimensional incompressible magneto-hydrodynamical (MHD) system with only magnetic diffusion on the periodic domain. The approach is based on a time-weighted energy estimate, under the assumptions that the initial magnetic field is close enough to an equilibrium state and the initial data have reflection symmetry.
\end{abstract}

\maketitle

\section{Introduction}
The dynamics of electrically conducting fluids interacting with magnetic fields can be described by the equations of magnetohydrodynamics (MHD).
When the viscosity of fluid is missing, it is extremely interesting and challenging to understand whether the magnetic diffusion only could prevent singularity development from small smooth initial data even in two dimensional physical space, in view of the strongly nonlinear coupling between fluids and magnetic field.
To this purpose, in this paper we will investigate the global existence of classical solutions to the following two dimensional incompressible magnetic hydrodynamical system
\begin{equation}\label{eq1.1}
\begin{cases}
B_t + u \cdot \nabla B -\Delta B= B\cdot \nabla u,\\
u_t + u \cdot \nabla u  + \nabla p= B\cdot \nabla B,\\
\nabla \cdot u = \nabla \cdot B = 0,\\
u(0,x)= u_0(x), \quad B(0,x) = B_0(x),
\end{cases}
\end{equation}
on the periodic domain
\begin{equation}\label{eq0.2}
x\in [-\pi, \pi]^2 = \mathbb{T}^2.
\end{equation}
Here $B=(B_1,B_2)^{\top}$ denotes the magnetic field, $u=(u_1,u_2)^{\top}$ the velocity and $p=q+\frac{1}{2}|B|^2$ where $q$ presents the scalar pressure of the fluid. This MHD system with only magnetic diffusion models many significant phenomenon such as the magnetic reconnection in astrophysics and geomagnetic dynamo in geophysics (see, e.g., \cite{EPTF}). For more physical background, we refer to \cite{HC, TGCDP, LDEM}.

  There is a growing literature devoted to these MHD systems so that it is almost impossible to be exhaustive in this introduction.
  To review the developments in this field systematically and exhaustively, we shall next classify the MHD systems into three kinds (accordingly to the level of dissipations) and recall the recent progress respectively. \\
$\mathbf{1. \; Viscous \; and \; resistive }$

For viscous and resistive MHD system (namely fully dissipative in fluids and in magnetic field), the existence and uniqueness results for both weak and strong solutions could be found in Duvaut and Lions \cite{DL} (we also refer to Sermange and Temam \cite{ST}).  Abidi and Paicu \cite{AP} generalized these results to the inhomogeneous system, their initial data is in the so called critical space. For the system with mixed partial viscosity and magnetic diffusion, Cao and Wu \cite{CW} (also see \cite{CRW}) derived the global well-posedness provided that initial data in $H^2(\mathbb{R}^2)$. \\
$\mathbf{2. \; Inviscid \; and \; non-resistive }$

For inviscid and non-resistive case (no viscosity, no magnetic diffusion, hence no dissipation), Bardos, Sulem and Sulem \cite{CBCS} first proved the global well-posedness of MHD system provided that the initial data is near a nontrivial equilibrium. This striking work made full use of the purely dispersion together with some coupling of nonlinearity between fluids and magnetic fields. Very recently, the vanishing dissipation limit from fully dissipative MHD system to this kind system has been justified by \cite{HXY, CL, WZ} under some structural conditions between viscosity and magnetic diffusion coefficients. \\
$\mathbf{3. \; Partially \; dissipative }$

For the remaining case, partially dissipative MHD system (only viscosity or magnetic diffusion presents), we first focus on the system with positive viscosity and zero resistivity. Many excellent works have just been made in recent years about this system. Lin and Zhang \cite{FLPZ} established the global well posedness for a closely related model in 3D (also see \cite{FLTZ}). With certain admissible condition for initial data, Lin, Xu and Zhang \cite{FLLX} proved the global existence of classical solutions in 2D when the initial data is close to a nontrivial equilibrium state. The same problem in 3D was obtained by Xu and Zhang \cite{LXPZ}. Ren, Wu, Xiang and Zhang \cite{REN} first removed the restriction in 2D case (more results in 2D we refer to \cite{TZ, HU, HU2}).
 Very recently, Abidi and Zhang \cite{HAPZ} get the global well posedness for 3D case without the admissible restriction. For the initial boundary value problem, we refer to \cite{RXZ2, TW, PZZ}.
For somehow large data, Lei \cite{Lei2} proved the global regularity of some axially symmetric solutions in three dimensional space to this kind MHD system.

In the case of our consideration, namely the MHD system with zero viscosity and positive resistivity, the global well posedness is still a challenging open problem even in two dimensional space. Due to the absence of viscosity, the behaviour of fluid seems to be extremely wild which brings the main difficulty. To the best of our knowledge, the system only admits a $H^1$ global weak solution \cite{LZ} (also see \cite{CW}). For the research of this kind generalized MHD system and regularity criterion, we refer to \cite{CWY, JZ, JNWXY}. To go forward, it is then natural to explore whether the MHD system with only magnetic diffusion can develop global classical solutions.

 The aim of our work is to establish the global existence of smooth solutions to the 2D incompressible magneto-hydrodynamical system with periodic boundary conditions, under the assumptions that the initial magnetic field is close enough to an equilibrium state and the initial data have some particular symmetries.

Our assumptions for the initial velocity and magnetic field are as follows  \\
\begin{equation}\label{eq0.3}
\begin{split}
u_{0,1}(x), \quad B_{0,2}(x)\quad &\text{are even periodic with respect to } x_2,\\
u_{0,2}(x),\quad  B_{0,1}(x) \quad &\text{are odd periodic with respect to } x_2.
\end{split}
\end{equation}
Moreover, we assume
\begin{equation}\label{eq0.4}
\int_{\mathbb{T}^2} u_0 \;dx = 0, \quad \int_{\mathbb{T}^2} B_{0,2} \;dx = \alpha \neq 0.
\end{equation}
Under such assumptions, now we can state our main result in the following theorem.
\begin{theorem}\label{thm1}
Consider the 2D incompressible MHD system \eqref{eq1.1}-\eqref{eq0.2} with initial data satisfies the conditions \eqref{eq0.3}-\eqref{eq0.4}. Assume that $(u_0, B_0) \in H^{2s + 1}(\mathbb{T}^2)$ with integer $s \geq 2$ and $\nabla \cdot u_0 = \nabla \cdot B_0 = 0$. Then there exists a small constant $\epsilon > 0$ depending  on $\alpha$ such that the system \eqref{eq1.1} admits a
global classical solution provided that
\begin{equation}\label{setofdata}
\|u_0\|_{H^{2s+1}} + \|\nabla B_0\|_{H^{2s}}\leq \epsilon.
\end{equation}
\end{theorem}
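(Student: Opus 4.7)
The plan is to perturb around the equilibrium $(u,B)=(0,(0,\alpha)^\top)$, writing $B=(0,\alpha)^\top+b$. The perturbation $(u,b)$ then satisfies
\begin{align*}
b_t - \Delta b - \alpha\,\partial_{x_2} u &= b\cdot\nabla u - u\cdot\nabla b,\\
u_t + \nabla p - \alpha\,\partial_{x_2} b &= b\cdot\nabla b - u\cdot\nabla u,
\end{align*}
whose linearisation couples $u$ and $b$ through $\alpha\,\partial_{x_2}$. Passing to the scalar vorticities $\omega=\partial_1 u_2-\partial_2 u_1$ and $j=\partial_1 b_2-\partial_2 b_1$, the principal part reads $\omega_t=\alpha\,\partial_{x_2} j$ and $j_t-\Delta j=\alpha\,\partial_{x_2}\omega$, and eliminating $j$ gives the damped wave operator
\[
\omega_{tt} - \Delta\,\omega_t - \alpha^2\,\partial_{x_2}^2\omega = 0.
\]
It is this operator, rather than any direct viscosity, that must generate the missing dissipation on $u$.

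The first step is to verify that the parities \eqref{eq0.3} are preserved by the full nonlinear flow (a direct term-by-term check) and that, together with \eqref{eq0.4} and $\nabla\cdot u=\nabla\cdot b=0$, they force the $x_2$-zero frequency of every component of $u$ and $b$ to vanish identically in time. On the resulting symmetric, zero-mean subspace the operator $\partial_{x_2}$ is invertible with a uniform Poincar\'e constant, so the Fourier symbol of the damped wave equation above is strictly negative and the coupling $\alpha\,\partial_{x_2}$ genuinely transfers the parabolic dissipation of $b$ to $u$; without these structural assumptions the zero $x_2$-mode would obstruct any such transfer.

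Next, I would set up a time-weighted energy of the form
\[
\mathcal{E}(t) = \sup_{0\le\tau\le t}(1+\tau)^{2\sigma}\bigl(\|u(\tau)\|_{H^{2s+1}}^2+\|\nabla b(\tau)\|_{H^{2s}}^2\bigr) + \int_0^t (1+\tau)^{2\sigma}\|\nabla b(\tau)\|_{H^{2s+1}}^2\,d\tau
\]
for a suitably chosen $\sigma>0$, supplemented by a companion functional with matching weights on $u_t$ and on $\partial_{x_2} u$ at one lower order. The usual $H^{2s+1}$ Moser estimates on $u$ and $\nabla b$ yield the top-order parabolic dissipation of $\nabla b$ for free but no dissipation of $u$; this missing piece is recovered by differentiating the cross-term $\int u\cdot \partial_{x_2} b\,dx$ (and its higher-order analogues) in time, substituting both equations, and integrating by parts, which produces the good quantity $\alpha\,\|\partial_{x_2} u\|_{L^2}^2$ modulo contributions controlled by $\|\partial_{x_2} b\|_{L^2}$ and cubic remainders.

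The main obstacle is closing the time-weighted estimate against the nonlinearity: every quadratic interaction is schematically $(u\text{ or }b)\cdot\nabla(u\text{ or }b)$, and the absence of viscosity forces one to extract an $L^1_t$-integrable decay of $\|\nabla u\|_{L^\infty}$ purely from the magnetic coupling. This is bought by interpolating the weighted top-order control inside $\mathcal{E}(t)$ against the $L^2$ bound supplied by the damped-wave structure, using the coercivity of $\partial_{x_2}$ on the symmetric subspace to trade $\|u\|$-bounds for $\|\partial_{x_2} u\|$-bounds at a cost depending only on $\alpha$, so that $\epsilon/\alpha$ becomes the effective small parameter. After this step the nonlinear contribution is bounded by $\mathcal{E}(t)^{3/2}$, giving $\mathcal{E}(t)\le C\mathcal{E}(0)+C\mathcal{E}(t)^{3/2}$; a standard continuity argument, starting from the local-in-time $H^{2s+1}$ solution produced by classical parabolic-hyperbolic theory, then upgrades that solution to a global classical one under the smallness \eqref{setofdata}.
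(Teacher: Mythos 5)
Your overall strategy coincides with the paper's: perturb around $B=e_2$ (after normalizing $\alpha$), check that the reflection symmetry \eqref{eq0.3} propagates, use the oddness of the vorticity $\omega$ in $x_2$ together with a Poincar\'e inequality in the $x_2$-direction to convert control of $\partial_2 u$ into control of $\nabla u$ (this is exactly Proposition \ref{prop}), and extract the dissipation $\|\partial_2 u\|^2$ from the cross term $\int \nabla^{2s}b_t\,\nabla^{2s}\partial_2 u\,dx$ after substituting the $u$-equation (this is the $M_7$ computation in Lemma \ref{lem1}). One small inaccuracy: the parity plus divergence-free plus mean-zero conditions kill the $x_2$-zero mode of $u_1$, $u_2$ and $b_1$, but not of $b_2$ (divergence-free only constrains $\partial_1\int b_1\,dx_2$); this is harmless because only the zero mode of $\omega$ is needed, but the blanket claim as stated is false.

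The genuine gap is in the design of the energy functional: you place the time weight $(1+\tau)^{2\sigma}$ on the \emph{top-order} norms $\|u\|_{H^{2s+1}}^2+\|\nabla b\|_{H^{2s}}^2$. This cannot close. Differentiating the weight produces a term of the schematic form $(1+t)^{2\sigma-1}\|u\|_{H^{2s+1}}^2$ that must be absorbed by dissipation, but the only dissipation available for $u$ at top order is $\|\partial_2 u\|_{H^{2s}}^2$, which by Proposition \ref{prop} controls $\|\nabla u\|_{H^{2s-1}}$ --- one full derivative short of $\|u\|_{H^{2s+1}}$. There is no room to interpolate upward, since nothing above $H^{2s+1}$ is controlled; so no decay of the top-order norm can be proved by this method, and your functional $\mathcal{E}(t)$ is not propagated. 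The paper's resolution, which is the key idea missing from your proposal, is a \emph{two-tier} scheme: an unweighted energy $\mathcal{E}_0$ at the top order $H^{2s+1}$ (bounded, not decaying) and a weighted energy $\mathcal{E}_1$ with weight $(1+\tau)^2$ at the strictly lower order $H^{2s-1}$. The weight-differentiation term in $\mathcal{E}_1$ is then absorbed by interpolating between the two tiers, e.g. $(1+t)\|\partial_2 u\|_{H^{2s-1}}^2\lesssim \|\partial_2 u\|_{H^{2s}}\cdot(1+t)\|\partial_2 u\|_{H^{2s-2}}$, and the crucial bound $\int_0^t\|\nabla u\|_{L^\infty}\,d\tau\lesssim \mathcal{E}_0^{1/8}\mathcal{E}_1^{3/8}$ --- needed for the inviscid transport term $u\cdot\nabla u$, which you correctly identify as the main enemy --- comes from the interpolation $\|\nabla u\|_{L^\infty}\lesssim\|\partial_2 u\|_{H^{2s}}^{1/4}\|\partial_2 u\|_{H^{2s-2}}^{3/4}$ combined with H\"older in time against the integrable factor $(1+\tau)^{-3/2}$. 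Without the unweighted top tier to borrow dissipation from, and without the gap of two derivatives between the two tiers, this trade is unavailable and the cubic terms cannot be bounded by $\mathcal{E}^{3/2}$.
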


Without loss of generality, we assume $\alpha = (2\pi)^2$. Following Lin and Zhang \cite{FLPZ}, we let $B_0=b_0 + e_2$, where $e_2=(0,1)^{\top}$. Hence, we can count
\begin{equation}\label{eq1.02}
\int_{\mathbb{T}^2} b_0 \;dx = 0.
\end{equation}
Let $B=b+e_2$, we can derive the system of  $(u, b)$ from \eqref{eq1.1}
\begin{equation}\label{eq1.2}
\begin{cases}
b_t + u \cdot \nabla b - \Delta b= b\cdot \nabla u + \partial_2 u,\\
u_t + u \cdot \nabla u  + \nabla p= b\cdot \nabla b + \partial_2 b,\\
\nabla \cdot u = \nabla \cdot b = 0,\\
u(0,x)= u_0(x), \quad b(0,x) = b_0(x).
\end{cases}
\end{equation}
And the properties of initial data \eqref{eq0.3}-\eqref{eq0.4} persist. Indeed,
\begin{equation}\label{eq1.3}
\begin{split}
u_1(t,x), \quad b_2(t,x)\quad &\text{are even periodic with respect to } x_2,\\
u_2(t,x),\quad  b_1(t,x) \quad &\text{are odd periodic with respect to } x_2.
\end{split}
\end{equation}
\begin{equation}\label{eq1.4}
\int_{\mathbb{T}^2} b \;dx = \int_{\mathbb{T}^2} u \;dx = 0.
\end{equation}

\begin{remark}\label{rem}
   Define $ \tilde{u}(t,x), \tilde{b}(t,x) $ as follows
  \begin{equation}\nonumber
\begin{split}
\tilde{u}_1(t, x_1, x_2) =& u_1(t,x_1, - x_2), \quad \tilde{u}_2(t,x_1, x_2) = - u(t,x_1,- x_2), \\
\tilde{b}_1(t, x_1, x_2) =& - b_1(t,x_1, -x_2), \quad \tilde{b}_2(t,x_1, x_2) = b_2 (t, x_1, -x_2).
\end{split}
\end{equation}
  It easily shows that $\tilde{u}, \tilde{b}$ satisfy the same system \eqref{eq1.2} like $u, b$ with the same initial data. Hence, by uniqueness of solution, we know that $\tilde{b}(t,x) = b(t,x)$ and $\tilde{u}(t,x) = u(t,x)$. Thus, the property \eqref{eq1.3} holds.
\end{remark}

  After the reformulation, we can turn to the system \eqref{eq1.2} instead of  considering the system \eqref{eq1.1} directly.

  In the proof of our theorem, we have to face the difficulties arising from no viscosity and bounded domain.
  These challenges will be overcome through some carefully designed weighted energies with the help of some observations to the structure of the system.
  One of our major observations is that Poincar\'{e} inequality can compensate the loss of possible dispersion generated by bounded domain. Another observation is that thanks to the
   symmetric condition \eqref{eq1.3},  we can capture the dissipation of velocity more easily. Now let us explain it more thoroughly. We all know that the motion of fluid is motivated by the velocity $u$. Thus, to preserve the
   regularity, one have to control the key quantity $\nabla u$ which seems to be quite difficult due to the lack of viscosity.  With the help of divergence free condition, we now study the vorticity $\omega = \partial_1 u_2 - \partial_2 u_1 $ instead. Under the symmetric condition \eqref{eq1.3}, it easily knows that $\omega(x)$ is odd periodic with respect to $x_2$ which implies the zero  integral average in $x_2$ direction immediately. Very naturally, the Poincar\'{e} inequality will be applied and the norms of $\omega$ can be controlled by the norms of $\partial_2\omega$. For more details, we can see Proposition \ref{prop} in Section 2.

\begin{remark}\label{rem2}
Finally, we point out the difficulty in the generalization to the three dimensional case. By choosing the initial magnetic field $B_0=e_3$, the 3D MHD system with zero viscosity and positive resistive reduces to the 2D incompressible Euler equations. As known in \cite{ZA, LEI}, on the torus, the gradient of vorticity $\|\nabla \omega(t)\|_{L^{\infty}}$ will grow exponentially in time with some initial data. Hence, we may not expect the same result as we obtained in this paper in the 3D case.
\end{remark}

\section{Preliminary}

In this section, we make some preparations for the proof of Theorem \ref{thm1} in Section 3.

First, we introduce a useful proposition based on Poincar\'{e} inequality which gratefully helps to capture the dissipation of velocity.
\begin{proposition}\label{prop}
For any vector $u(x) \in H^{k+2}(\mathbb{T}^2)$, $k \in \mathbb{N}$ satisfying $\nabla \cdot u=0$ and the following condition
\begin{equation}\label{eqp1}
\begin{split}
u_1(x) &\quad \text{is even periodic with respect to } x_2,\\
u_2(x)&\quad \text{is odd periodic with respect to } x_2.
\end{split}
\end{equation}
It holds that
\begin{equation}\nonumber
 \|\nabla u\|_{H^k (\mathbb{T}^2)} \lesssim  \|\partial_2 u\|_{H^{k+1} (\mathbb{T}^2)} .
\end{equation}
\end{proposition}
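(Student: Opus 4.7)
The plan is to argue by Fourier series on $\mathbb{T}^2$, where the symmetry condition \eqref{eqp1} together with the divergence-free constraint combine to kill every Fourier mode of $u$ with $\xi_2=0$, apart from the $\xi=0$ constant mode which plays no role for $\nabla u$. Once that observation is made, the inequality reduces to the elementary bound $|\xi_2|\ge 1$ on the remaining frequencies.

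Concretely, I would expand $u_j(x)=\sum_{\xi\in\mathbb{Z}^2}\hat u_j(\xi)\,e^{i\xi\cdot x}$. The parity assumption \eqref{eqp1} yields $\hat u_1(\xi_1,\xi_2)=\hat u_1(\xi_1,-\xi_2)$ and $\hat u_2(\xi_1,\xi_2)=-\hat u_2(\xi_1,-\xi_2)$; the second identity, specialised to $\xi_2=0$, gives $\hat u_2(\xi_1,0)=0$ for every $\xi_1\in\mathbb{Z}$. The divergence-free relation $\xi_1\hat u_1(\xi)+\xi_2\hat u_2(\xi)=0$ evaluated at $\xi_2=0$ then forces $\hat u_1(\xi_1,0)=0$ whenever $\xi_1\ne 0$. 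So the only Fourier mode of $u$ lying on the line $\xi_2=0$ that can be nonzero is the zero mode itself.

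Expressing both sides of the target inequality via Plancherel gives
\[
\|\nabla u\|_{H^k}^2 = \sum_{\xi\ne 0}(1+|\xi|^2)^k |\xi|^2|\hat u(\xi)|^2,\qquad \|\partial_2 u\|_{H^{k+1}}^2 = \sum_{\xi\ne 0}(1+|\xi|^2)^{k+1}\xi_2^2|\hat u(\xi)|^2.
\]
By the previous paragraph the left-hand sum is effectively supported on $|\xi_2|\ge 1$, where $|\xi|^2\le 1+|\xi|^2\le (1+|\xi|^2)\xi_2^2$, so the inequality holds term by term.

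The only nontrivial point is the interaction between the parity and the incompressibility at $\xi_2=0$: neither condition alone eliminates those frequencies, but together they do. A physical-space variant uses Poincar\'e in $x_2$ directly (oddness of $u_2$ in $x_2$ gives zero $x_2$-mean, hence $\|\partial_1 u_2\|\lesssim \|\partial_1\partial_2 u_2\|$, while $\partial_1 u_1=-\partial_2 u_2$ from incompressibility covers the remaining component of $\nabla u$), with higher derivatives treated identically. I do not anticipate any obstacle beyond making sure this first step is stated cleanly; the $H^{k+2}$ hypothesis on $u$ is used only to ensure both sides are finite.
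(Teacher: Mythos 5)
Your argument is correct, and it is genuinely different from the paper's. The paper reduces $\nabla u$ to the vorticity $\omega=\partial_1u_2-\partial_2u_1$ via the Biot--Savart relation $\nabla u=-\nabla(-\Delta)^{-1}\nabla^{\perp}\omega$ and Calder\'on--Zygmund boundedness, observes that $\omega$ is odd in $x_2$ (hence every $\partial^{\alpha}\omega$ has zero $x_2$-average), and applies the one-dimensional Poincar\'e inequality slice by slice to get $\|\nabla u\|_{H^k}\lesssim\|\omega\|_{H^k}\lesssim\|\partial_2\omega\|_{H^k}\lesssim\|\partial_2u\|_{H^{k+1}}$. You instead work directly on the Fourier side of $u$: the oddness of $u_2$ kills $\hat u_2(\xi_1,0)$, incompressibility then kills $\hat u_1(\xi_1,0)$ for $\xi_1\neq0$, and on the surviving frequencies $|\xi_2|\ge1$ the inequality is a pointwise comparison of symbols. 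Your route is more elementary (no singular-integral bounds, no Poincar\'e lemma needed), yields explicit constants, and makes transparent that only the oddness of $u_2$ --- not the evenness of $u_1$ --- is actually required; the paper's vorticity route uses both parities but connects more directly to the vorticity-based heuristics invoked elsewhere in the energy estimates. Your closing physical-space sketch ($\partial_1u_1=-\partial_2u_2$ plus Poincar\'e on the odd component $\partial_1u_2$) is a third valid variant, intermediate between the two.
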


\begin{proof}

We define the quantity $\omega = \partial_1u_2-\partial_2u_1$ in the two dimensional space and then we have the following   equality
$$ \Delta u=\nabla\nabla \cdot u+\nabla^{\perp}\omega.$$
Here $\nabla^{\perp}=(-\partial_2,\partial_1)$.

On one hand, notice the divergence free condition, we have
$$ \nabla u= -\nabla (-\Delta)^{-1}\nabla^{\perp}\omega .$$
Using standard Calderon-Zygmund theory $\|Zg\|_{L^p} \lesssim \|g\|_{L^p}$ for Riesz operator $Z$ and $1<p<\infty$, we can bound
\begin{equation}\label{eqpp}
\|\nabla u\|_{H^{k}} \lesssim \|\omega\|_{H^{k}}.
\end{equation}

On the other hand, due to the definition of $\omega$ and the assumption \eqref{eqp1}, we easily know that $\omega$ is odd periodic with respect to $x_2$ which implies
\begin{equation}\label{eqppp}
\frac{1}{2\pi}\int_{-\pi}^{\pi} \omega(x_1,x_2) dx_2 =0, \quad \forall x_1 \in [-\pi,\pi].
\end{equation}
For multi-index $\alpha = (\alpha_1, \alpha_2)$, indeed $\partial ^{\alpha} = \partial_1^{\alpha_1}\partial_2^{\alpha_2}$, we can write
\begin{equation}\label{eqp2}
\|\omega\|^2_{H^k (\mathbb{T}^2)} =\sum_{|\alpha|=0}^k \int_{-\pi}^{\pi} \int_{-\pi}^{\pi} |\partial^{\alpha}\omega(x_1,x_2)|^2 dx_2 dx_1  .
\end{equation}
Notice the fact \eqref{eqppp}, we have, for multi-index $\alpha=(\alpha_1,0)$
\begin{equation}\nonumber
  \frac{1}{2\pi}\int_{-\pi}^{\pi} \partial^{\alpha}\omega(x_1,x_2) dx_2 =0.
\end{equation}
Also, for multi-index $\alpha=(\alpha_1,\alpha_2)$ where $\alpha_2 >0$, using the periodic boundary condition, we have
\begin{equation}\nonumber
  \int_{-\pi}^{\pi} \partial^{\alpha}\omega(x_1,x_2) dx_2 = \partial_1^{\alpha_1}\partial_2^{\alpha_2-1} \omega(x_1,\cdot)\Big|_{-\pi}^{\pi} =0.
\end{equation}
Therefore, the integral average of $\partial^{\alpha} \omega(x_1, x_2)$ in $x_2$ direction over $[-\pi, \pi]$ is zero.
Hence, applying standard Poincar\'{e} inequality to $\partial^{\alpha}\omega(x_1,x_2)$ in $x_2$ direction, we have $\forall x_1 \in [-\pi,\pi]$:
\begin{equation}\nonumber
  \int_{-\pi}^{\pi} |\partial^{\alpha}\omega(x_1,x_2)|^2 dx_2 \lesssim \int_{-\pi}^{\pi} |\partial^{\alpha}\partial_2\omega(x_1,x_2)|^2 dx_2.
\end{equation}
According to the definition of $\|\omega\|_{H^k(\mathbb{T}^2)}$ i.e. \eqref{eqp2}, we finally obtain
\begin{equation}\label{eqpp1}
\begin{split}
\|\omega\|^2_{H^k (\mathbb{T}^2)} =&\sum_{|\alpha|=0}^k \int_{-\pi}^{\pi} \int_{-\pi}^{\pi} |\partial^{\alpha}\omega(x_1,x_2)|^2 dx_2 dx_1 \\
\lesssim & \sum_{|\alpha|=0}^k \int_{-\pi}^{\pi} \int_{-\pi}^{\pi} |\partial^{\alpha}\partial_2\omega(x_1,x_2)|^2 dx_2 dx_1 \\
= &\|\partial_2\omega\|^2_{H^k (\mathbb{T}^2)}.
\end{split}
\end{equation}

Finally, combining \eqref{eqpp} and \eqref{eqpp1} together, we finish the proof of this proposition
\begin{equation}\nonumber
\begin{split}
\|\nabla u\|_{H^{k}(\mathbb{T}^2)} \lesssim \|\omega\|_{H^{k}(\mathbb{T}^2)}
\lesssim \|\partial_2 \omega\|_{H^{k}(\mathbb{T}^2)}
 \lesssim \|\partial_2 u\|_{H^{k+1}(\mathbb{T}^2)}.
\end{split}
\end{equation}

\end{proof}

Now, we introduce the energies which will enable us to achieve our desired estimates. Based on the discussion in Section 1, we define some time-weighted energies for the system \eqref{eq1.2}. The energies below are defined on the domain $\mathbb{R}^+ \times\mathbb{T}^2$. For $s\in \mathbb{N}$, we give
\begin{equation}\label{eqframe}
\begin{split}
\mathcal{E}_0(t) =& \sup_{0\leq \tau \leq t}  \big(\|u(\tau)\|_{H^{2s+1}}^2+\|b(\tau)\|_{H^{2s+1}}^2 \big)\\
&+ \int_{0}^{t}  \big(\|b(\tau)\|_{H^{2s+2}}^2+\|\partial_2u(\tau)\|_{H^{2s}}^2\big) \;d\tau,\\
\mathcal{E}_1(t)=& \sup_{0\leq \tau \leq t} (1+\tau)^2 \big(\|u(\tau)\|_{H^{2s-1}}^2+\|b(\tau)\|_{H^{2s-1}}^2\big)\\
&+ \int_{0}^{t}  (1+\tau)^2\big(\|b(\tau)\|_{H^{2s}}^2+\|\partial_2u(\tau)\|_{H^{2s-2}}^2\big) \; d\tau.
\end{split}
\end{equation}

In the next section, we will derive the uniform estimate of $\mathcal{E}_0(t)$ and $\mathcal{E}_1(t)$ and then give the proof of Theorem \ref{thm1}. When doing the estimate, we only need to consider the highest order norms in each energy due to \eqref{eq1.4} and Poincar$\mathrm{\acute{e}}$ inequality.

\section{Energy estimate and the proof of  main result}
\subsection{ \textit{A priori} estimate}
We shall first deal with the higher order energy $\mathcal{E}_0(t)$ and prove the Lemma 3.1. It shows the uniform bound of highest order norm $H^{2s+1}(\mathbb{T}^2)$ for both $u(t,\cdot)$ and $b(t,\cdot)$.
\begin{lemma}\label{lem1}
For $s\geq 2$, we have
\begin{equation}\label{E_0}
\mathcal{E}_0(t)\lesssim \mathcal{E}_0(0)+\mathcal{E}_0^{3/2}(t)+\mathcal{E}_0^{9/8}(t)\mathcal{E}_1^{3/8}(t).
\end{equation}
\end{lemma}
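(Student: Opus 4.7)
The plan is to establish the two pieces of $\mathcal{E}_0(t)$ separately and then combine them with a small multiplier. First I would carry out a standard top-order energy estimate for system \eqref{eq1.2}: apply $\partial^\alpha$ with $|\alpha|=2s+1$ to the two equations, test with $\partial^\alpha b$ and $\partial^\alpha u$, and sum. Thanks to $\nabla \cdot u = \nabla \cdot b = 0$, the pressure drops, the transport terms $\int u\cdot\nabla \partial^\alpha b \cdot \partial^\alpha b$ and $\int u\cdot\nabla \partial^\alpha u \cdot \partial^\alpha u$ integrate to zero, and the standard MHD cancellation $\int(b\cdot\nabla u)\cdot b + \int(b\cdot\nabla b)\cdot u = 0$ operates at the top derivatives after Leibniz. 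The crucial structural point is that the linear coupling $\int \partial^\alpha \partial_2 u \cdot \partial^\alpha b + \int \partial^\alpha u \cdot \partial^\alpha \partial_2 b$ vanishes by integration by parts in $x_2$, so the residual principal part is the magnetic dissipation $\|\nabla \partial^\alpha b\|_{L^2}^2$. Summing over $|\alpha|\leq 2s+1$ and using \eqref{eq1.4} with Poincaré on the zero-mean $b$ upgrades this to $\int_0^t \|b\|_{H^{2s+2}}^2 \, d\tau$, which is exactly the magnetic piece of $\mathcal{E}_0$.

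The subtler piece is the velocity dissipation $\int_0^t \|\partial_2 u\|_{H^{2s}}^2\, d\tau$, for which there is no viscous contribution. I would recover it through a cross-term (modified) functional
\[
\mathcal{F}(t) := \sum_{|\alpha|=2s} \int_{\mathbb{T}^2} \partial^\alpha u \cdot \partial^\alpha \partial_2 b\, dx,
\]
which exploits the wave-like linear structure $u_t \approx \partial_2 b$ and $b_t - \Delta b \approx \partial_2 u$ of \eqref{eq1.2}. Differentiating $\mathcal{F}$ in time and substituting from the equations, the leading part produces $\|\partial_2 b\|_{H^{2s}}^2 - \|\partial_2 u\|_{H^{2s}}^2$, together with one extra cross term $-\int \partial^\alpha \partial_2 u \cdot \partial^\alpha \Delta b$ that can be controlled by $\delta\|\partial_2 u\|_{H^{2s}}^2 + C_\delta \|b\|_{H^{2s+2}}^2$; the pressure again drops against the divergence-free condition. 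Integrating in time and noting $|\mathcal{F}(t)| \lesssim \|u\|_{H^{2s}}\|b\|_{H^{2s+1}} \lesssim \mathcal{E}_0(t)$, I would then combine $\mathcal{F}$ with the previous energy inequality using a sufficiently small multiplier so that the $\delta$-term is absorbed and $\int_0^t\|\partial_2 u\|_{H^{2s}}^2$ appears on the left, closing the linear part of the bound.

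For the nonlinear contributions, the tools are Kato–Ponce commutator and product estimates, two-dimensional Sobolev embedding, and Proposition \ref{prop} to convert $\nabla u$ norms into $\partial_2 u$ norms (so that they lie inside the dissipation). Every trilinear term from $u\cdot\nabla u$, $u\cdot\nabla b$, $b\cdot\nabla u$, $b\cdot\nabla b$ and their commutators will be reduced to an $L^\infty \cdot L^2 \cdot L^2$ Hölder, with the two $L^2$ factors ideally matching one of the dissipative norms. Those terms in which the two $L^2$ factors can be read against the magnetic dissipation $\|b\|_{H^{2s+2}}$ and/or the velocity dissipation $\|\partial_2 u\|_{H^{2s}}$ integrate in time by Cauchy–Schwarz and yield the bound $\mathcal{E}_0^{3/2}$. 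The remaining terms, in which the $L^\infty$ factor — typically $\|\nabla u\|_{L^\infty}$ or $\|\nabla b\|_{L^\infty}$ — is not by itself time-integrable at the dissipation scale, are closed by interpolating this $L^\infty$ norm between the top regularity $\mathcal{E}_0^{1/2}$ and the decaying lower norm $(1+\tau)^{-1}\mathcal{E}_1^{1/2}$ that is guaranteed by $\mathcal{E}_1$. Choosing the Sobolev interpolation exponent so that the residual power of $(1+\tau)^{-1}$ is time-integrable is what fixes the particular split $\mathcal{E}_0^{9/8}\mathcal{E}_1^{3/8}$.

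I expect the main obstacle to be precisely this last interpolation: with only magnetic diffusion, the velocity has no autonomous dissipation beyond what Proposition \ref{prop} extracts, so any trilinear that reduces to $\int_0^t \|\nabla u\|_{L^\infty}\, d\tau$ or $\int_0^t \|\nabla b\|_{L^\infty}\, d\tau$ must be repartitioned so that a fragment of the integrand decays at rate $(1+\tau)^{-1-\delta}$. Balancing the decay borrowed from $\mathcal{E}_1$ against the regularity spent from $\mathcal{E}_0$ is exactly what forces the asymmetric exponents $9/8$ and $3/8$; once each trilinear is carefully split in this way and the commutator terms organized, summing all contributions and absorbing yields \eqref{E_0}.
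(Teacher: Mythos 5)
Your proposal is correct and follows essentially the same route as the paper: the top-order energy identity with the divergence-free and linear-coupling cancellations, a cross functional of the form $\int \nabla^{2s}\partial_2 b\cdot\nabla^{2s}u\,dx$ (the paper's term $M_7$) to extract $\int_0^t\|\partial_2 u\|_{H^{2s}}^2\,d\tau$, and the interpolation of $\|\nabla u\|_{L^\infty}$ via Proposition \ref{prop} between the $\mathcal{E}_0$- and $\mathcal{E}_1$-controlled norms to produce the $\mathcal{E}_0^{9/8}\mathcal{E}_1^{3/8}$ term. The only cosmetic difference is that the paper reaches the cross term by testing the $b$-equation with $\nabla^{2s}\partial_2 u$ and integrating $\nabla^{2s}b_t\,\nabla^{2s}\partial_2 u$ by parts, rather than differentiating your $\mathcal{F}$ directly.
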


\begin{proof}
We divide the proof into two parts. Instead of deriving the estimate of $\mathcal{E}_0(t)$ directly, we will first get the uniform bound of $\mathcal{E}_{0,1}(t)$ which defined as follows
\begin{equation}\label{eqE01}
\mathcal{E}_{0,1}(t) \triangleq    \sup_{0\leq \tau \leq t}  \big(\|u(\tau)\|_{H^{2s+1}}^2+\|b(\tau)\|_{H^{2s+1}}^2 \big)
+ \int_{0}^{t}   \|b(\tau)\|_{H^{2s+2}}^2 \; d\tau .
\end{equation}

To get the estimate of $\mathcal{E}_{0,1}$, we first operate $\nabla^{2s+1}$ derivative on the system \eqref{eq1.2}. Then, taking inner product with $\nabla^{2s+1} b$ for the first equation of system \eqref{eq1.2} and taking inner product with $\nabla^{2s+1} u$ for the second equation of system \eqref{eq1.2}, we get

 \begin{equation}\label{lem2.1}
 \frac{1}{2} \frac{d}{dt} \big( \|b\|_{\dot H^{2s+1}}^2+\|u\|_{\dot H^{2s+1}}^2 \big)+\|b\|_{\dot H^{2s+2}}^2=M_1+M_2+M_3+M_4,
 \end{equation}

where,
\begin{equation}\nonumber
\begin{split}
  M_1=&-\int_{\mathbb{T}^2}u\cdot \nabla \nabla^{2s+1} u \; \nabla^{2s+1} u + u\cdot \nabla \nabla^{2s+1} b \; \nabla^{2s+1} b \;dx\\
  &+\int_{\mathbb{T}^2}b\cdot \nabla \nabla^{2s+1} u \; \nabla^{2s+1} b + b\cdot \nabla \nabla^{2s+1} b \; \nabla^{2s+1} u \;dx\\
  &+\int_{\mathbb{T}^2}\nabla^{2s+1} \partial_2u \nabla^{2s+1}b+\nabla^{2s+1} \partial_2b \nabla^{2s+1}u\; dx ,\\
  M_2=& \sum_{k=1}^{s}C_{2s+1}^{k}\int_{\mathbb{T}^2} (\nabla^{k}b \cdot \nabla \nabla^{2s+1-k} u- \nabla^{k} u \cdot \nabla
\nabla^{2s+1-k} b) \nabla^{2s+1} b\;dx\\
&+\sum_{k=s+1}^{2s+1}C_{2s+1}^{k}\int_{\mathbb{T}^2} (\nabla^{k}b \cdot \nabla \nabla^{2s+1-k} u- \nabla^{k} u \cdot \nabla
\nabla^{2s+1-k} b) \nabla^{2s+1} b\;dx ,\\
M_3=&- \sum_{k=1}^{2s+1}C_{2s+1}^{k}\int_{\mathbb{T}^2} \nabla^{k}u\cdot \nabla \nabla^{2s+1-k} u\; \nabla^{2s+1}u \;dx,\\
M_4=&\sum_{k=1}^{2s+1}C_{2s+1}^{k}\int_{\mathbb{T}^3}\nabla^{k}b \cdot \nabla \nabla^{2s+1-k} b\; \nabla^{2s+1} u\;dx .
\end{split}
\end{equation}

First, for the term $M_1$, using integration by parts and divergence free condition, we have
\begin{equation}\label{eqI1}
M_1 = 0.
\end{equation}

The main idea of the following estimates is to derive the bound of each term carefully so that it can be controlled by the combination of $\mathcal{E}_0(t)$ and $\mathcal{E}_1(t)$. By H\"{o}lder inequality and Sobolev imbedding theorem, for $s\geq 2$, we have
\begin{equation}\nonumber
\begin{split}
|M_2| \lesssim & \|u\|_{H^{2s+1}}\|b\|_{W^{s,\infty}}\|b\|_{H^{2s+1}}+\|u\|_{W^{s,\infty}}\|b\|_{H^{2s+1}}^2\\
&+\|b\|_{H^{2s+1}}\|u\|_{W^{s+1,\infty}}\|b\|_{H^{2s+1}}+\|b\|_{W^{s+1,\infty}}\|u\|_{H^{2s+1}}\|b\|_{H^{2s+1}}\\
 \lesssim & \|u\|_{H^{2s+1}}\|b\|_{H^{2s+2}}^2.
\end{split}
\end{equation}
Hence,
\begin{equation}\label{eqI2}
\begin{split}
\int_{0}^{t} |M_2(\tau)| \;d\tau \lesssim &\sup_{0 \leq \tau \leq t} \|u(\tau)\|_{H^{2s+1}}\int_{0}^{t} \|b(\tau)\|_{H^{2s+2}}^2\; d\tau\\
 \lesssim& \mathcal{E}^{3/2}_0(t).
\end{split}
\end{equation}

For the estimate of $M_3$, we should point out that it is the wildest term in our proof due to the bad behaviour of $u\cdot \nabla u$, indeed the inviscid property of our system.
Through the analysis of system \eqref{eq1.2}, we easily know $\partial_2u$ is a good term so that $u_2\cdot \nabla_2 u$ can be controlled well. While the estimate for $u_1\cdot\nabla_1 u$ is still nontrivial. Thanks to the Proposition \ref{prop} proved in Section 2, we can overcome this problem by introducing the vorticity $\omega$ as a connection. Our main strategy can be stated as follows.

Due to the divergence free condition of $u$ and the property \eqref{eq1.3}, we can bound the norm of $\nabla u$ by controlling $\omega$, thus $\partial_2 \omega$, indeed $\partial_2 \nabla u$.
Now, using H\"{o}lder inequality and Gagliardo--Nirenberg interpolation inequality,
\begin{equation}\nonumber
\begin{split}
  |M_3|\lesssim & \sum_{k = 1}^{2s+1} \|\nabla ^{k} u\|_{L^\frac{4s}{k-1}} \|\nabla ^{2s+2-k} u\|_{L^\frac{4s}{2s+1-k}} \|\nabla ^{2s+1} u\|_{L^2}\\
\lesssim & \sum_{k = 1}^{2s+1} \|\nabla u\|_{L^\infty}^\frac{2s+1-k}{2s} \|\nabla u\|_{H^{2s}}^\frac{k - 1}{2s} \|\nabla u\|_{L^\infty} ^\frac{k - 1}{2s}\|\nabla u\|_{H^{2s}}^\frac{2s+1-k}{2s}\|u\|_{H^{2s+1}}\\
 \lesssim & \|\nabla u\|_{L^\infty}\|u\|_{H^{2s+1}}^2\\
 \lesssim & \|\nabla u\|_{H^{2s-1}}^{1/4} \|\nabla u\|_{H^{2s-3}}^{3/4} \|u\|_{H^{2s+1}}^2 .
\end{split}
\end{equation}
Notice here $\frac{4s}{k-1}$ can be treated as $\infty$ when $k=1$, and similar holds for $\frac{4s}{2s+1-k}$. Also in the last step, we use condition $s \geq 2$ for the interpolation inequality.
Thus according to Proposition \ref{prop}, we directly have
\begin{equation}\nonumber
|M_3|
  \lesssim \|\partial_2 u\|_{H^{2s}}^{1/4}\| \partial_2 u\|_{H^{2s-2}}^{3/4}\|u\|_{H^{2s+1}}^2.
\end{equation}
Hence,
\begin{equation}\label{eqI3}
\begin{split}
&\int_{0}^{t} |M_3(\tau)| \;d\tau \\
\lesssim& \sup_{0 \leq \tau \leq t}\|u(\tau)\|_{H^{2s+1}}^2 \cdot \int_{0}^{t} \frac{1}{(1+\tau)^{3/4}}\|\partial_2 u\|_{H^{2s}}^{1/4}(1+\tau)^{3/4}\| \partial_2 u\|_{H^{2s-2}}^{3/4}
\;d\tau\\
\lesssim & \mathcal{E}^{9/8}_0(t)\mathcal{E}^{3/8}_1(t).
\end{split}
\end{equation}

For the last term $M_4$, we use the same method as above
\begin{equation}\nonumber
\begin{split}
  |M_4|\lesssim & \|b\|_{W^{s+1,\infty}}  \|b\|_{H^{2s+1}}   \|u\|_{H^{2s+1}}\\
  \lesssim & \| b\|_{H^{2s+2}}^2  \|u\|_{H^{2s+1}}.
\end{split}
\end{equation}
Thus we can bound,
\begin{equation}\label{eqI4}
\begin{split}
\int_{0}^{t} |M_4(\tau)| \;d\tau \lesssim& \sup_{0 \leq \tau \leq t} \|u(\tau)\|_{H^{2s+1}} \int_{0}^{t}
\|b(\tau)\|_{H^{2s+2}}^2\; d\tau \\
\lesssim &
 \mathcal{E}^{3/2}_0(t).
 \end{split}
\end{equation}

Summing up the estimates for $M_1 \thicksim M_4$, i.e., \eqref{eqI1}, \eqref{eqI2}, \eqref{eqI3} and \eqref{eqI4}. Integrating \eqref{lem2.1} with time, we now get the estimate of $\mathcal{E}_{0,1}(t)$ which is defined in \eqref{eqE01},
\begin{equation}\label{E01}
\mathcal{E}_{0,1}(t) \lesssim  \mathcal{E}_0(0)+\mathcal{E}^{3/2}_0(t)+\mathcal{E}^{9/8}_0(t)\mathcal{E}^{3/8}_1(t).
\end{equation}
Here, we have used the Poincar$\mathrm{\acute{e}}$ inequality to consider the highest order norm only.

Next, we work with the left term in $\mathcal{E}_0(t)$. Operating $\nabla^{2s}$ derivative on the first equation of system \eqref{eq1.2} and taking inner product with $\nabla^{2s} \partial_2 u$, we get

\begin{equation}\label{eqlem1.2}
\|\partial_2 u\|_{\dot H^{2s}}^2 = M_5 + M_6 + M_7,
\end{equation}
where
\begin{equation}\nonumber
\begin{split}
  M_5 =& \int_{\mathbb{T}^2} \nabla^{2s} (u\cdot \nabla b-b\cdot \nabla u) \nabla^{2s}\partial_2 u \; dx ,\\
  M_6 =& -\int_{\mathbb{T}^2}\nabla^{2s} \Delta b \nabla^{2s}\partial_2 u \; dx ,\\
  M_7 =& \int_{\mathbb{T}^2} \nabla^{2s} b_t \nabla^{2s}\partial_2 u \; dx .
\end{split}
\end{equation}

 As the process in the estimate of $\mathcal{E}_{0,1}$, we shall derive the estimate of each term on the right hand side of \eqref{eqlem1.2}. Using H\"{o}lder inequality and Sobolev imbedding theorem, for $s\geq 2$, we can bound
\begin{equation}\nonumber
\begin{split}
  |M_5| \lesssim &\|u\|_{W^{s+1,\infty}} \|b\|_{H^{2s+1}}\|\partial_2 u\|_{H^{2s}} +\|b\|_{W^{s+1,\infty}} \|u\|_{H^{2s+1}}\|\partial_2 u\|_{H^{2s}}   \\
  \lesssim &\|u\|_{H^{2s+1}}\|b\|_{H^{2s+1}}\|\partial_2 u\|_{H^{2s}}.
\end{split}
\end{equation}
Thus,
\begin{equation}\label{eqI5}
\begin{split}
&\int_{0}^{t} |M_5(\tau)| \; d\tau\\
 \lesssim & \sup_{0\leq \tau \leq t} \|u(\tau)\|_{H^{2s+1}} \int_{0}^{t} \|b\|_{H^{2s+1}}\|\partial_2 u\|_{H^{2s}}\; d\tau \\
\lesssim & \mathcal{E}^{3/2}_0(t) .
\end{split}
\end{equation}

The estimate for $M_6$ is almost the same, we directly say
\begin{equation}\label{eqI6}
  \begin{split}
\int_{0}^{t} |M_6(\tau)| \; d\tau \lesssim & \int_{0}^{t} \|b\|_{H^{2s+2}}\|\partial_2 u\|_{H^{2s}} \; d\tau\\
\lesssim & \mathcal{E}_{0,1}^{1/2}(t) \Big( \int_{0}^{t}\|\partial_2 u\|_{H^{2s}}^2 \; d\tau \Big)^{1/2} .
\end{split}
\end{equation}

For the last term $M_7$, using integration by parts, we can write this term into
\begin{equation}\label{eq2.19}
  M_7 = \frac{d}{dt}\int_{\mathbb{T}^2} \nabla^{2s}b \nabla^{2s}\partial_2 u \;dx +\int_{\mathbb{T}^2} \nabla^{2s}\partial_2 b \nabla^{2s}u_t \;dx .
\end{equation}
 Notice the second equality of system \eqref{eq1.2}, the second part on the right hand side of \eqref{eq2.19} can be estimated as follows
\begin{equation}\nonumber
\begin{split}
  & \int_{\mathbb{T}^2}  \nabla^{2s}\partial_2 b \nabla^{2s} \big( b\cdot \nabla b+\partial_2 b-u\cdot \nabla u-\nabla p\big) \; dx \\
  \lesssim & \|b\|_{W^{s,\infty}}\|b\|_{H^{2s+1}}^2 + \|b\|_{H^{2s+1}}^2 +\|b\|_{H^{2s+1}} \|\nabla u\|_{W^{s-1,\infty}}\|u\|_{H^{2s+1}} \\
  \lesssim & \|b\|_{H^{2s}}\|b\|_{H^{2s+1}}^2 +\|b\|_{H^{2s+1}}^2+\|b\|_{H^{2s+1}} \|\partial_2 u\|_{H^{2s}}\|u\|_{H^{2s+1}},
\end{split}
\end{equation}
provided that $ s\geq 2$. Thus,
\begin{equation}\label{eqI7}
\int_{0}^{t} |M_7(\tau)| \; d\tau \lesssim \mathcal{E}_{0,1}(t)+\mathcal{E}^{3/2}_0(t).
\end{equation}

Integrating \eqref{eqlem1.2} with time, using \eqref{eqI5}, \eqref{eqI6}, \eqref{eqI7}, Young inequality and Poincar$\mathrm{\acute{e}}$ inequality, we can bound

\begin{equation}\label{eqE02}
\int_{0}^{t} \|\partial_2 u\|_{H^{2s}}^2 \; d\tau \lesssim \mathcal{E}_{0,1}(t)+\mathcal{E}^{3/2}_0(t).
\end{equation}

Multiplying \eqref{E01} by suitable large number and plus \eqref{eqE02}, we then complete the proof of this lemma.

\end{proof}

Next, we want to give the estimate of lower order energy $\mathcal{E}_1(t)$ defined in \eqref{eqframe}. We give the following lemma.

\begin{lemma}\label{lem2}
For $s \geq 2$, we have
\begin{equation}\label{E_1}
\mathcal{E}_1(t)\lesssim \mathcal{E}_1(0)+\mathcal{E}_0^{1/2}(t)\mathcal{E}_1^{1/2(t)}+\mathcal{E}_0^{1/8}(t)\mathcal{E}_1^{11/8}(t)+\mathcal{E}_0^{1/2}(t)\mathcal{E}_1(t)+\mathcal{E}_1^{3/2}(t).
\end{equation}
\end{lemma}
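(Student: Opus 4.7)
The plan is to mirror the structure of the proof of Lemma~\ref{lem1}, but at the lower derivative order $2s-1$ and with the time weight $(1+\tau)^2$ incorporated from the outset. First I would apply $\nabla^{2s-1}$ to \eqref{eq1.2}, pair the magnetic equation with $\nabla^{2s-1}b$ and the velocity equation with $\nabla^{2s-1}u$, and add, arriving at an identity of the same shape as \eqref{lem2.1},
\begin{equation*}
\tfrac{1}{2}\tfrac{d}{dt}\bigl(\|b\|_{\dot H^{2s-1}}^2+\|u\|_{\dot H^{2s-1}}^2\bigr)+\|b\|_{\dot H^{2s}}^2 = \tilde{M}_1+\tilde{M}_2+\tilde{M}_3+\tilde{M}_4,
\end{equation*}
with $\tilde{M}_1=0$ (integration by parts plus divergence-free) and $\tilde{M}_2,\tilde{M}_3,\tilde{M}_4$ the shifted analogues of $M_2,M_3,M_4$. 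Multiplying by $(1+\tau)^2$ and integrating from $0$ to $t$ puts the weight on the supremum and dissipation parts of $\mathcal{E}_1(t)$, at the price of a remainder $W(t):=\int_0^t(1+\tau)\bigl(\|b\|_{\dot H^{2s-1}}^2+\|u\|_{\dot H^{2s-1}}^2\bigr)\,d\tau$ coming from the derivative of $(1+\tau)^2$.

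Next I would dispose of $W(t)$ and the commutator terms $\tilde{M}_2,\tilde{M}_4$. For $W(t)$, the Hadamard-type interpolation $\|f\|_{H^{2s-1}}^2\le\|f\|_{H^{2s}}\|f\|_{H^{2s-2}}$, the zero-mean property \eqref{eq1.4}, and Proposition~\ref{prop} together give $\|u\|_{H^{2s-1}}^2\lesssim\|\partial_2 u\|_{H^{2s}}\|\partial_2 u\|_{H^{2s-2}}$ and the obvious analogue for $b$; Cauchy--Schwarz in $\tau$ with the weight split as $1\cdot(1+\tau)$ then produces $W(t)\lesssim\mathcal{E}_0^{1/2}(t)\mathcal{E}_1^{1/2}(t)$, the second term of \eqref{E_1}. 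For $\tilde{M}_2$ and $\tilde{M}_4$, I would reuse the H\"older--Sobolev chains from \eqref{eqI2} and \eqref{eqI4}, but distribute the weight $(1+\tau)^2$ so that each of the two lower-regularity factors carries one power of $(1+\tau)$ (absorbing it into $\sup_\tau(1+\tau)\|u\|_{H^{2s-1}}$ or $\sup_\tau(1+\tau)\|b\|_{H^{2s-1}}\le\mathcal{E}_1^{1/2}$), while the remaining highest-regularity factor is paired in $\tau$ against the $\|b\|_{H^{2s+2}}\in L^2_\tau$ bound provided by $\mathcal{E}_0$. This yields contributions of size $\mathcal{E}_0^{1/2}(t)\mathcal{E}_1(t)$ and $\mathcal{E}_1^{3/2}(t)$, accounting for the fourth and fifth terms of \eqref{E_1}.

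The main obstacle will be $\tilde{M}_3$, the inviscid transport nonlinearity, just as $M_3$ was the hardest term in Lemma~\ref{lem1}. The same Gagliardo--Nirenberg interpolation (valid since $s\ge 2$) combined with Proposition~\ref{prop} gives
\begin{equation*}
|\tilde{M}_3|\lesssim\|\nabla u\|_{L^\infty}\|u\|_{H^{2s-1}}^2\lesssim\|\partial_2 u\|_{H^{2s}}^{1/4}\|\partial_2 u\|_{H^{2s-2}}^{3/4}\|u\|_{H^{2s-1}}^2.
\end{equation*}
Placing the full weight $(1+\tau)^2$ on $\|u\|_{H^{2s-1}}^2$, which is controlled pointwise by $\mathcal{E}_1(t)$, and then splitting the integrand as $\|\partial_2 u\|_{H^{2s}}^{1/4}\cdot(1+\tau)^{-3/4}\cdot(1+\tau)^{3/4}\|\partial_2 u\|_{H^{2s-2}}^{3/4}$, a triple H\"older inequality with exponents $(8,2,8/3)$ — whose pivotal ingredient is that $(1+\tau)^{-3/2}\in L^1(\mathbb{R}^+)$ — produces $\int_0^t(1+\tau)^2|\tilde{M}_3|\,d\tau\lesssim\mathcal{E}_1(t)\cdot\mathcal{E}_0^{1/8}(t)\mathcal{E}_1^{3/8}(t)=\mathcal{E}_0^{1/8}(t)\mathcal{E}_1^{11/8}(t)$, the third term of \eqref{E_1}. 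Finally, to recover the $\int_0^t(1+\tau)^2\|\partial_2 u\|_{H^{2s-2}}^2\,d\tau$ piece of $\mathcal{E}_1(t)$, I would repeat the second half of the proof of Lemma~\ref{lem1}: apply $\nabla^{2s-2}$ to the first equation of \eqref{eq1.2}, test it against $(1+\tau)^2\nabla^{2s-2}\partial_2 u$, integrate in space-time, and estimate the three resulting pieces (the analogues of $M_5,M_6,M_7$) by the same arguments as in \eqref{eqI5}--\eqref{eqI7}, with the extra weight redistributed exactly as above. Combining this bound with the one from the first half through a Young inequality absorption closes \eqref{E_1}.
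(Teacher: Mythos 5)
Your proposal follows essentially the same route as the paper's proof: the same two-part splitting (a weighted $H^{2s-1}$ energy identity, then a separate test of $\nabla^{2s-2}$ of the $b$-equation against $(1+\tau)^2\nabla^{2s-2}\partial_2 u$ to recover $\int(1+\tau)^2\|\partial_2 u\|_{H^{2s-2}}^2\,d\tau$), the same treatment of the weight-derivative remainder via Hadamard interpolation and Proposition \ref{prop}, and the identical Gagliardo--Nirenberg plus $(8,2,8/3)$-H\"older argument with $(1+\tau)^{-3/2}\in L^1$ for the $u\cdot\nabla u$ term, yielding $\mathcal{E}_0^{1/8}\mathcal{E}_1^{11/8}$. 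One bookkeeping slip in your handling of $\tilde{M}_2,\tilde{M}_4$: you cannot place \emph{both} lower-regularity factors into suprema and leave a lone $\|b\|_{H^{2s+2}}$ under the time integral, since that quantity is only square-integrable, not integrable, in $\tau$; instead keep one weighted factor under the integral and pair it against the high-order factor by Cauchy--Schwarz (or, as the paper does, simply bound these terms by $\sup_\tau\|u\|_{H^{2s}}\cdot\int_0^t(1+\tau)^2\|b\|_{H^{2s}}^2\,d\tau\lesssim\mathcal{E}_0^{1/2}\mathcal{E}_1$, with no integration by parts needed), after which the claimed right-hand side of \eqref{E_1} follows.
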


\begin{proof}
Like the proof above, we divide the proof into two parts. Also, we will first get the estimate of $\mathcal{E}_{1,1}$ which defined in the following
\begin{equation}\nonumber
\mathcal{E}_{1,1}(t) \triangleq   \sup_{0\leq \tau \leq t}  (1+\tau)^2 \big(\|u(\tau)\|_{H^{2s-1}}^2+\|b(\tau)\|_{H^{2s-1}}^2 \big)
+ \int_{0}^{t}   (1+\tau)^2\|b(\tau)\|_{H^{2s}}^2 \; d\tau .
\end{equation}

Operating $\nabla^{2s-1}$ derivative on  system \eqref{eq1.2}, taking inner product with $\nabla^{2s-1} b$ for the first equation of \eqref{eq1.2} and taking inner product with $\nabla^{2s-1} u$ for the second equation of \eqref{eq1.2}. Adding the time weight $(1+t)^2$, we get
 \begin{equation}\label{lem2.2}
 \frac{1}{2} \frac{d}{dt} (1+t)^2 \big( \|b\|_{\dot H^{2s-1}}^2+\|u\|_{\dot H^{2s-1}}^2 \big)+ (1+t)^2\|b\|_{\dot H^{2s}}^2=N_1+N_2+N_3+N_4,
 \end{equation}
where,
\begin{equation}\nonumber
\begin{split}
N_1 =& (1+t)\big( \|b\|_{\dot H^{2s-1}}^2+\|u\|_{\dot H^{2s-1}}^2 \big), \\
N_2= & (1+t)^2 \int_{\mathbb{T}^2} \nabla^{2s-1}(b\cdot \nabla u-u\cdot\nabla b) \nabla^{2s-1} b \; dx ,\\
N_3 =& (1+t)^2 \int_{\mathbb{T}^2}\nabla^{2s-1}(b\cdot\nabla b)\nabla^{2s-1} u \; dx ,\\
N_4 =& -(1+t)^2 \int_{\mathbb{T}^2}\nabla^{2s-1}(u\cdot\nabla u)\nabla^{2s-1} u \; dx .
\end{split}
\end{equation}

Similarly, for the term $N_1$, using Proposition \ref{prop} and Gagliardo--Nirenberg interpolation inequality, we have
\begin{equation}\nonumber
\begin{split}
  |N_1|
  \lesssim &(1+t)\big( \|\partial_2 u\|_{ H^{2s-1}}^2+\|b\|_{ H^{2s-1}}^2 \big) \\
  \lesssim & \|\partial_2u\|_{ H^{2s}} (1+t)\|\partial_2u\|_{ H^{2s-2}}+\|b\|_{ H^{2s+2}} (1+t)\|b\|_{ H^{2s}} .
\end{split}
\end{equation}
Thus,
\begin{equation}\label{eqJ1}
\begin{split}
&\int_{0}^{t} |N_1(\tau)| \; d\tau \\
\lesssim &
\big(\int_{0}^{t} \|\partial_2u\|_{ H^{2s}}^2 \; d\tau \big)^{1/2}\cdot \big(\int_{0}^{t} (1+\tau)^2\|\partial_2u\|_{ H^{2s-2}}^2 \; d\tau \big)^{1/2}\\
&+ \big(\int_{0}^{t} \|b\|_{ H^{2s+2}}^2 \; d\tau \big)^{1/2}\cdot \big(\int_{0}^{t} (1+\tau)^2\|b\|_{ H^{2s}}^2 \; d\tau \big)^{1/2}\\
\lesssim &
\mathcal{E}_0^{1/2}(t)\mathcal{E}_1^{1/2}(t).
\end{split}
\end{equation}

For the term $N_2$, using H\"{o}lder inequality and Sobolev imbedding theorem,  notice $s \geq 2$ we have
\begin{equation}\nonumber
\begin{split}
  |N_2| \lesssim & (1+t)^2\|b\|_{W^{s,\infty}} \| u\|_{H^{2s}}\|b\|_{H^{2s-1}} +(1+t)^2\| u\|_{W^{s,\infty}} \|b\|_{H^{2s}}\|b\|_{H^{2s-1}}   \\
  \lesssim &(1+t)^2\| u\|_{H^{2s}}\|b\|_{H^{2s}}\|b\|_{H^{2s-1}}\\
  \lesssim &\| u\|_{H^{2s}} (1+t)^2\|b\|_{H^{2s}}^2 .
\end{split}
\end{equation}
Hence,
\begin{equation}\label{eqJ2}
\begin{split}
\int_{0}^{t} |N_2(\tau)| \; d\tau \lesssim &
\sup_{0 \leq \tau \leq t} \|u(\tau)\|_{H^{2s}} \int_{0}^{t} (1+\tau)^2\|b\|_{H^{2s}}^2\; d\tau \\
\lesssim &
\mathcal{E}_0^{1/2}(t)\mathcal{E}_1(t).
\end{split}
\end{equation}

Also for the term $N_3$, notice $s \geq 2$, we directly know that
\begin{equation}\nonumber
\begin{split}
  |N_3| \lesssim & (1+t)^2\|b\|_{W^{s,\infty}} \|b\|_{H^{2s}}\|u\|_{H^{2s-1}} \\
  \lesssim &(1+t)^2\|b\|_{H^{2s}}^2 \|u\|_{H^{2s-1}}.
\end{split}
\end{equation}
Hence,
\begin{equation}\label{eqJ3}
\begin{split}
\int_{0}^{t} |N_3(\tau)| \; d\tau \lesssim &
\sup_{0 \leq \tau \leq t} \|u(\tau)\|_{H^{2s-1}} \int_{0}^{t} (1+\tau)^2\|b\|_{H^{2s}}^2 \; d\tau\\
\lesssim &
\mathcal{E}_0^{1/2}(t)\mathcal{E}_1(t).
\end{split}
\end{equation}

Next, we turn to the last but wildest term $N_4$. We shall first rewrite it into two terms,
\begin{equation}\nonumber
\begin{split}
N_4  =& -(1+t)^2 \int_{\mathbb{T}^2} u\cdot \nabla\nabla^{2s-1}u \nabla^{2s-1}u dx \\
&-(1+t)^2 \sum_{k=1}^{2s-1}C_{2s-1}^{k}\int_{\mathbb{T}^2}\nabla^k u\cdot \nabla\nabla^{2s-1-k}u \nabla^{2s-1}u  dx .
\end{split}
\end{equation}
Using the same method as in $M_3$, by Gagliardo--Nirenberg interpolation inequality and Proposition \ref{prop}, we get
\begin{equation}\nonumber
\begin{split}
  |N_4| \lesssim & (1+t)^2 \|\nabla u\|_{L^\infty} \|u\|_{H^{2s-1}}^2 \\
  \lesssim & \|\partial_2 u\|_{H^{2s}}^{1/4} \|\partial_2 u\|_{H^{2s-2}}^{3/4}(1+t)^2 \|u\|_{H^{2s-1}}^2,
\end{split}
\end{equation}
provided that $s\geq 2$. Thus, we obtain
\begin{equation}\label{eqJ4}
\begin{split}
&\int_{0}^{t} |N_4 (\tau)| \; d\tau\\
 \lesssim& \sup_{0\leq \tau \leq t}  (1+\tau)^2 \|u(\tau)\|_{H^{2s-1}}^2\int_{0}^{t} \|\partial_2 u\|_{H^{2s}}^{1/4} \|\partial_2 u\|_{H^{2s-2}}^{3/4} \; d\tau \\
\lesssim& \mathcal{E}_0^{1/8}(t)\mathcal{E}_1^{11/8}(t).
\end{split}
\end{equation}

Now, summing up the estimates for $N_1 \thicksim N_4$, thus \eqref{eqJ1}, \eqref{eqJ2}, \eqref{eqJ3} and \eqref{eqJ4}. Integrating \eqref{lem2.2} with time, we get the estimate of $\mathcal{E}_{1,1}$
\begin{equation}\label{eqE111}
\mathcal{E}_{1,1}(t)\lesssim  \mathcal{E}_1(0)+\mathcal{E}_0^{1/2}(t)\mathcal{E}_1^{1/2}(t)+\mathcal{E}_0^{1/2}(t)\mathcal{E}_1(t)+\mathcal{E}_0^{1/8}(t)\mathcal{E}_1^{11/8}(t).
\end{equation}
Here, we have used the Poincar$\mathrm{\acute{e}}$ inequality to consider the highest order term only.

Totally like the process in the proof of Lemma 3.1, operating $\nabla^{2s-2}$ derivative on the first equation of \eqref{eq1.2} and taking
inner product with $\nabla^{2s-2} \partial_2 u$, adding the time weigh $(1+t)^2$, we get

\begin{equation}\label{eqlem2.2}
(1+t)^2 \|\partial_2 u\|_{\dot H^{2s-2}}^2 = N_5 + N_6 + N_7,
\end{equation}
where,
\begin{equation}\nonumber
\begin{split}
  N_5 =& (1+t)^2\int_{\mathbb{T}^2} \nabla^{2s-2} (u\cdot \nabla b-b\cdot \nabla u) \nabla^{2s-2}\partial_2 u \; dx, \\
  N_6 =& -(1+t)^2 \int_{\mathbb{T}^2}\nabla^{2s-2} \Delta b \nabla^{2s-2}\partial_2 u \; dx ,\\
  N_7 =& (1+t)^2 \int_{\mathbb{T}^2} \nabla^{2s-2} b_t \nabla^{2s-2}\partial_2 u \; dx .
\end{split}
\end{equation}

Using H\"{o}lder inequality and Sobolev imbedding theorem, we easily get, for $s \geq 2$
\begin{equation}\nonumber
\begin{split}
  |N_5| \lesssim & (1+t)^2\|u\|_{W^{s-1,\infty}} \|b\|_{H^{2s-1}}\|\partial_2 u\|_{H^{2s-2}} \\
  & +(1+t)^2\|b\|_{W^{s-1,\infty}} \|u\|_{H^{2s-1}}\|\partial_2 u\|_{H^{2s-2}}   \\
  \lesssim &(1+t)^2\|u\|_{H^{2s-1}}\|b\|_{H^{2s-1}}\|\partial_2 u\|_{H^{2s-2}}.
\end{split}
\end{equation}
Hence, we conclude
\begin{equation}\label{eqJ5}
\begin{split}
&\int_{0}^{t} |N_5(\tau)| \; d\tau\\
 \lesssim & \sup_{0 \leq \tau \leq t} \|u(\tau)\|_{H^{2s-1}} \int_{0}^{t} (1+\tau)^2 \|b\|_{H^{2s-1}}\|\partial_2 u\|_{H^{2s-2 }}\; d\tau \\
\lesssim & \mathcal{E}_0^{1/2}(t)\mathcal{E}_1(t) .
\end{split}
\end{equation}

Similarly, we have
\begin{equation}\label{eqJ6}
  \begin{split}
\int_{0}^{t} |N_6(\tau)| \; d\tau \lesssim & \int_{0}^{t} (1+\tau)^2\|b\|_{H^{2s}}\|\partial_2 u\|_{H^{2s-2}} \; d\tau\\
\lesssim & \mathcal{E}_{1,1}^{1/2} (t) \Big( \int_{0}^{t}\|\partial_2 u\|_{H^{2s-2}}^2 \; d\tau \Big)^{1/2} .
\end{split}
\end{equation}

For the last term $N_7$, we shall first rewrite it. Using integration by parts, we have
\begin{equation}\label{eqj7}
\begin{split}
  N_7 =& \frac{d}{dt} (1+t)^2\int_{\mathbb{T}^2} \nabla^{2s-2}b \nabla^{2s-2}\partial_2 u \;dx \\ &-2(1+t)\int_{\mathbb{T}^2} \nabla^{2s-2}b \nabla^{2s-2}\partial_2 u \;dx \\
  &+(1+t)^2 \int_{\mathbb{T}^2} \nabla^{2s-2}\partial_2 b \nabla^{2s-2 }u_t \;dx .
\end{split}
\end{equation}
 Notice the second equality of \eqref{eq1.2}, the last term on the right hand side of \eqref{eqj7} can be bounded
\begin{equation}\nonumber
\begin{split}
  & (1+t)^2 \int_{\mathbb{T}^2}  \nabla^{2s-2}\partial_2 b \nabla^{2s-2} \big( b\cdot \nabla b+\partial_2 b-u\cdot \nabla u-\nabla p\big) \; dx \\
  \lesssim & (1+t)^2\|b\|_{W^{s-1,\infty}}\|b\|_{H^{2s-1}}^2 +(1+t)^2 \|b\|_{H^{2s-1}}^2 \\
  &+(1+t)^2\|b\|_{H^{2s-1}} \|\nabla u\|_{W^{s-2,\infty}}\|u\|_{H^{2s-1}} \\
  \lesssim & (1+t)^2 \|b\|_{H^{2s-2}}\|b\|_{H^{2s-1}}^2 +(1+t)^2 \|b\|_{H^{2s-1}}^2\\
  &+(1+t)^2 \|b\|_{H^{2s-1}} \| \partial_2 u\|_{H^{2s}}\|u\|_{H^{2s-1}},
\end{split}
\end{equation}
provided that $ s\geq 2$. Thus, the estimate of $N_7$ can be concluded
\begin{equation}\label{eqJ7}
\begin{split}
|\int_{0}^{t} N_7(\tau) \; d\tau| \lesssim &\sup_{0 \leq \tau \leq t} (1+\tau)^2 \|b(\tau)\|_{H^{2s-2}}
 \|\partial_2 u\|_{H^{2s-2}}\\
 & + \int_{0}^{t} (1+\tau) \|b\|_{H^{2s-2}} \|\partial_2 u\|_{H^{2s-2}} \; d\tau \\
 & +\sup_{0 \leq \tau \leq t}\|b(\tau)\|_{H^{2s-2}} \int_{0}^{t} (1+\tau)^2 \|b\|_{H^{2s-1}}^2 \; d\tau \\
 & + \int_{0}^{t} (1+\tau)^2 \|b\|_{H^{2s-1}}^2 \; d\tau\\
 & + \sup_{0 \leq \tau \leq t} (1+\tau) \|u\|_{H^{2s-1}} \int_{0}^{t} (1+\tau) \|b\|_{H^{2s-1}} \|\partial_2 u\|_{H^{2s}} \; d\tau \\
 \lesssim &\mathcal{E}_{1,1}(t)+\mathcal{E}_0^{1/2}(t)\mathcal{E}_1^{1/2}(t)+\mathcal{E}_0^{1/2}(t)\mathcal{E}_1(t).
 \end{split}
\end{equation}

Integrating \eqref{eqlem2.2} with time, using \eqref{eqJ5}, \eqref{eqJ6}, \eqref{eqJ7} and Young inequality we get
\begin{equation}\label{eqE12}
\int_{0}^{t}  (1+\tau)^2 \|\partial_2 u\|_{H^{2s-2}}^2 \; d\tau \lesssim \mathcal{E}_{1,1}+\mathcal{E}_0^{1/2}(t)\mathcal{E}_1^{1/2}(t)+\mathcal{E}_0^{1/2}(t)\mathcal{E}_1(t).
\end{equation}

Now, multiplying \eqref{eqE111} by suitable large number and plus \eqref{eqE12}, using Young inequality, we complete the proof of this lemma.
\end{proof}

\subsection{Proof of the Theorem \ref{thm1}}

First, we define the total energy as follows
$$ \mathcal{E}(t)=\mathcal{E}_0(t) +  \mathcal{E}_1(t). $$
Multiplying \eqref{E_0} and \eqref{E_1} in the above lemmas by different suitable number, and summing them up, we can get
\begin{equation}\label{eqE}
\mathcal{E}(t)\leq  C_1\mathcal{E}(0)+C_1 \mathcal{E}^{3/2}(t),
\end{equation}
for some positive constant $C_1$.

Under the setting of initial data \eqref{setofdata}, there exists a positive constant $C_2$ such that the initial total energy $ \mathcal{E}(0) \leq C_2 \epsilon$. According to the standard local well-posedness theory which can be obtained by classical arguments, there exists a positive time $T$ such that for $C_3=C_1C_2$,
\begin{equation}\label{eqlocal}
 \mathcal{E}(t) \leq 2C_3 \epsilon, \qquad \forall \; t\in [0,T].
\end{equation}
Let $T^*$ be the largest possible time of $T$ satisfying \eqref{eqlocal}, it's then to show $T^*=\infty$. Notice the estimate \eqref{eqE}, we can use a standard continuation argument to get the desired result provided that $\epsilon$ is small enough. Hence, we finish the proof of Theorem 1.1.

\section*{Acknowledgement}

The first author is supported by Key Laboratory of Mathematics for Nonlinear Sciences (Fudan University), Ministry of Education of China, Shanghai, Key Laboratory for Contemporary Applied Mathematics, School of Mathematical Sciences, Fudan University, NSFC under grant No.11421061, 973 Program (grant No.2013CB834100) and 111 project.

\end{document}